\title{On the Girth of Graph Lifts}
\author{Shlomo Hoory}
\affil{{\small Department of Computer Science, Tel-Hai College, Upper Galilee, 1220800 Israel\\hooryshl@telhai.ac.il}}
\newtheorem{theorem}{Theorem}[section]
\newtheorem{lemma}[theorem]{Lemma}
\newtheorem{proposition}[theorem]{Proposition}
\newtheorem{corollary}[theorem]{Corollary}
\newcommand{\R}{\mathbb{R}}  
\newcommand{\tH}{\tilde{H}}
\newcommand{\dist}{\mbox{dist}}
\newcommand{\diam}{\mbox{diam}}
\newcommand{\girth}{\mbox{girth}}
\newcommand{\dirE}{\overline{E}}
\newcommand{\red}{\cal{R}}
\newcommand{\perm}{\mbox{perm}}
\newcommand{\vgeq}{\mbox{\rotatebox[origin=c]{-90}{$\geq$}}}
\newcommand{\MnR}{M_n(\R)} 
\newcommand{\one}{\mathbf{1}} 
\newcommand{\x}{\mathbf{x}} 
\newcommand{\y}{\mathbf{y}} 
\newcommand{\z}{\mathbf{z}} 
\newcommand{\deltav}{\mathbf{\delta_v}} 
\DeclareMathOperator{\mindeg}{mindeg}
\DeclareMathOperator{\maxdeg}{maxdeg}
\begin{document}
\maketitle

\begin{abstract}
The size of the smallest $k$-regular graph of girth at least $g$ is denoted by the well-studied function $n(k,g)$.
We introduce an analogous function $n(H,g)$, defined as the smallest size graph of girth at least $g$ that is a lift (or cover) of the, possibly non-regular, graph $H$.
We prove that the two main combinatorial bounds on $n(k,g)$--- the Moore lower bound and the Erd\"{o}s-Sachs upper bound---carry over to the new lift setting. 

We also consider two other functions:
i) The smallest size graph of girth at least $g$ sharing a universal cover with $H$. We prove that it is the same as $n(H,g)$ up to a multiplicative constant.
ii) The smallest size graph of girth least $g$ with a prescribed degree distribution. We discuss this known generalization and argue that the new suggested definitions are superior.

We conclude with experimental results for a specific base graph, followed by conjectures and open problems for future research.
\end{abstract}

{\bf Keywords: } Lifts of graphs, Graph cover, Girth, Moore bound, Universal cover, Perron eigenvalue, Non-regular graph, Growth rate, Greedy Algorithms.

\section{Introduction}

The girth of a graph is the length of its shortest cycle. It is a classical challenge to determine $n(k, g)$, the smallest possible number of vertices $n$ of a $k$-regular graph of girth at least $g$. Minimal size $k$-regular graphs of girth $g$ are called $(k,g)$-cages, see~\cite{biggs1993algebraic,exoo2012dynamic} for a survey of the subject.
There are two simple combinatorial arguments yielding lower and upper bounds on $n(k,g)$, namely the Moore lower bound and the Erd\"{o}s-Sachs 1963 upper bound,~\cite{erdos1963regulare}. Their asymptotic version is:
\begin{equation}\label{eqn:regularAsymp}
\Omega({(k-1)}^{g/2}) \leq n(k,g) \leq O({(k-1)}^g)    
\end{equation}

To this day, the Moore bound remains the best asymptotic lower bound, and Erd\"{o}s-Sachs achieves the best asymptotic upper bound by means of a combinatorial construction. There are number theoretic constructions that do better, such as the celebrated 1988 construction by Lubotzky, Phillips and Sarnak~\cite{lubotzky1988ramanujan}, that improves the upper bound to $O({(k-1)}^{3g/4})$.

Less is known about the minimal size of non-regular girth $g$ graphs. 
The result of Alon, Hoory and Linial 2002~\cite{alon2002moore} generalizes the Moore bound to non-regular graphs. 
They proved that the smallest possible number of vertices of a girth $g$ graph with minimal degree at least two and degree distribution ${\cal D}$ is lower bounded by:
\begin{equation}\label{eqn:AsympAHL}
n({\cal D},g) \geq \Omega(\Lambda({\cal D})^{g/2})
\end{equation}
where $\Lambda = \Lambda({\cal D}) = \prod_{d} {(d-1)}^{(d \, p_d)/{(\sum_{d'} d' p_{d'}})}$ and $p_d$ is the portion of degree $d$ vertices specified by ${\cal D}$.
The base of the exponent, $\Lambda$, is some weighted average of the degree minus one, known to be lower bounded by the average degree minus one.

However, accepting $n({\cal D},g)$ as the right analogue of $n(d,g)$ to non-regular graphs has several drawbacks. 
Firstly, the Erd\"{o}s-Sachs construction does not carry over to the new setting, so there is no "trivial" upper bound. 
Secondly, the existence of Moore graphs, achieving tight equality with the non-asymptotic version of \eqref{eqn:AsympAHL}, does not seem possible, expect for degenerate cases, as indicated by Eisner and Hoory~\cite{eisner2024entropy}.

In this work, we argue that there is a better way to formulate the problem using lifts of graphs. 
Given any finite connected graph $H$ with minimal degree at least two, $n(H,g)$ is the smallest lift of $H$ with girth at least $g$. 
We prove that the Moore bound and the Erd\"{o}s-Sachs construction naturally generalize to the new setting, even in their non-asymptotic form. 
For the sake of avoiding complications, we exclude the case of $H$ being a cycle, as it is well understood.
We also note that the $n(H,g)$ problem is not a generalization of the $n(k,g)$ problem, since the family of all $k$-regular graphs is not necessarily a lift of some base graph $H$.

Following is the main theorem of the paper. The required notation will be introduced later. At this point, it suffices to know that multigraph with loops broadens the class of admissible graphs $H$, and that $\rho(\tH)$ is some average of the degree minus one over the vertices of $H$. If $H$ is $k$-regular, then $\rho(\tH)=k-1$. 

\begin{theorem}\label{theorem:main}
For a finite connected multigraph with loops $H$ with minimal degree at least two and maximal degree greater than two:
\begin{equation}\label{liftAsymp}
\Omega({\rho(\tH)}^{g/2}) \leq n(H,g) \leq O({\rho(\tH)}^g),
\end{equation}
where $\rho(\tH)$ is the growth rate of $H$'s universal covering tree.
\end{theorem}

The rest of the paper is organized as follows.
In Section~\ref{section:lifts} we define multigraph with loops, graph lifts, the universal cover of a graph, its growth rate $\rho(\tH)$ and we show that it is the same as the Perron eigenvalue of non-backtracking adjacency matrix of the graph.
In Section~\ref{section:bounds} we prove the non-asymptotic form of Theorem~\ref{theorem:main} and derive the theorem as a corollary. 
In Section~\ref{section:comparing} we explore the relation of $n(H,g)$ to similar definitions. 
Namely, the size of the smallest graph sharing the same universal cover and the smallest graph sharing the same degree distribution. We demonstrate these results for the base graphs $K_{3,2}$ and $H_{23}$, shown in Figure~\ref{fig:three_graphs}~(a) and (b). 
In Section~\ref{section:concrete}, we give numeric results for upper and lower bounds on $n(H_{23},g)$. Namely, the Moore bound, the Erd\"{o}s-Sachs construction, exhaustive search, and five greedy construction algorithms. 
We conclude our work with several conjectures and open questions.

\section{Lifts of Graphs and the Growth Rate of Balls}\label{section:lifts}
Graphs are always undirected and connected, with minimal degree $\geq 2$ and maximal degree $>2$.
The graphs are not necessarily simple, as we allow multiple edges and two types of self loops: half-loops, and whole-loops,
following the convention of Friedman and others~\cite{angel2015non,friedman1993some,friedmanKohler2014relativized,leighton1982finite}. 
If the graph is simple, the extra notation can be ignored, and each edge is regarded as a pair of directed edges, each being the inverse of the other.

A graph $H$ is the tuple $(V,\dirE,h,t,\iota)$, where $V$ is the vertex set, $\dirE$ is the directed edge set, $h,t:\dirE \rightarrow V$ respectively map an edge to its head or tail vertex, and the inverse $\iota$, maps an edge to its inverse $\iota(e)=e^{-1}$.
The obvious relations hold: $h(e^{-1})=t(e)$ and that the inverse mapping is an involution. 
Note that while a half-loop contributes one to the degree, a whole-loop contributes two.
An edge $e$ with $h(e)=t(e)$ is called a loop: a half-loop if $e=e^{-1}$ and a whole-loop otherwise.
The degree of $v$, denoted $\deg(v)$ is the number of edges whose tail is $v$, i.e. $\deg(v) = |t^{-1}(v)|=|h^{-1}(v)|$.
The neighborhood of $v$ is $\{h(e):e \in t^{-1}(v)\}$, namely the heads of edges emanating from $v$.
A length $k$ walk $\omega$ from $u$ to $v$ is an edge sequence $e_1,\ldots,e_k$ such that $t(e_1)=u$, $h(e_k)=v$ and $h(e_i)=t(e_{i+1})$ for all $i$.
The walk $\omega$ is non-backtracking if $e_{i+1} \neq (e_i)^{-1}$ for all $i$.
Let $\dist(u,v)$ be the length of the shortest walk from $u$ to $v$ (necessarily, such a walk is always non-backtracking). 
Denote the radius $r$ ball centered at some vertex $v$ or edge $e$ by $B_r(v) = \{u : \dist(u,v) \leq r\}$ and $B_r(e) = B_r(h(e)) \cup B_r(t(e))$.

Let $H$ be a finite undirected and connected graph, as above.
Following the definition of a topological covering we say that the graph $G$ covers $H$, or that $G$ is a lift of $H$, if there is a surjective homomorphism $\pi:G \rightarrow H$ that is a bijection of the edges emanating from $v$ to the edges emanating from $\pi(v)$ for all $v \in V(G)$, 
i.e. $\pi(t^{-1}(v)) = t^{-1}(\pi(v))$. 
One can verify that fiber size is fixed for all vertices and edges. 
Namely, there is an integer $n$, which we call the height of the lift, such that $|\pi^{-1}(v)|=|\pi^{-1}(e)|=n$ for all $v \in V(G)$ and $e \in \dirE$. 
We identify $V(G)$ with $V(H) \times [n]$ and $\dirE(G)$ with $\dirE(H) \times [n]$, where $[n] = \{1,2,\ldots,n\}$.
Furthermore, we regard $\pi$ as a mapping $\perm_\pi$ from $\dirE(H)$ to the permutation group $S_n$, 
where for any $e \in \dirE(H)$ and $i \in[n]$ the edge in $\pi^{-1}(e)$ with tail $(t(e),i)$ has its head at $(h(e),\perm_\pi(e)(i))$. It follows that 
$\perm_\pi(e^{-1})=\perm_\pi(e)^{-1}$ and that $\perm_\pi$ of a half-loop is an involution.

The universal cover of $H$, denoted by $\tH$, is the graph covering all lifts of $H$. This graph, which is unique up to isomorphism, is an infinite tree. One way to construct $\tH$ is to choose some arbitrary vertex $v_0 \in V(H)$ and let $V(\tH)$ be the set of all finite non-backtracking walks from $v_0$. Two vertices of $\tH$ are adjacent if one walk extends the other by a single step and the root of the tree is the empty walk $\epsilon$.

We define the growth rate of $\tH$ by 
$\rho(\tH) = \lim_{r} |B_r(v)|^{1/r}$, 
where $v$ is some arbitrary vertex in $V(\tH)$.
The non-backtracking adjacency operator of $H$ is the 0-1 matrix $B$ of directed edges versus directed edges, where $B_{f,e}=1$ if $t(f)=h(e)$ and $e \neq f^{-1}$.
Then by the following propositions, as $H$ has a minimal degree of $\geq 2$ and maximal degree $>2$, 
the matrix $B$ is non-negative irreducible, so by the Perron-Frobenius Theorem, its spectral radius $\rho(B)$ is an algebraically simple eigenvalue corresponding to strictly positive left and right eigenvectors. Furthermore, all eigenvalues whose absolute value is equal to $\rho(B)$ are algebraically simple.

\begin{proposition}[\cite{glover2021non} proposition 3.3,\cite{eisner2024entropy}]\label{proposition:glover2021non}
    The non-backtracking adjacency matrix $B_H$ is irreducible iff $H$ is connected with $\mindeg(H) \geq 2$ and $\maxdeg(H) > 2$.
\end{proposition}

\begin{proposition}[Perron–Frobenius, Theorem 8.4.4 from~\cite{horn2012matrix}]\label{proposition:perron}
    Given a non-negative irreducible matrix $M \in \MnR$ for $n \ge 2$, the spectral radius $\rho(M)$ is a positive and algebraically simple eigenvalue.
    Furthermore, up to scaling there are unique vectors $\x,\y$ so that $M \x = \rho(M) \x$ and $\y^T M = \rho(M) \y^T$;
    these vectors are strictly positive.
\end{proposition}

\begin{proposition}[Corollary 8.4.6 (c) from~\cite{horn2012matrix}]\label{proposition:max_abs_lambda_simple}
    Given a non-negative irreducible matrix $M \in \MnR$, let $k$ be the number of distinct maximal eigenvalues in absolute value. 
    Then, these eigenvalues are algebraically simple and their values are $\rho(M) e^{2 \pi i j/k}$ for $j=0,\ldots,k-1$.
\end{proposition}

\begin{theorem}\label{theorem:treeBallSize}
    For any finite undirected connected graph $H$ with $\mindeg(H) \geq 2$ and $\maxdeg(H) > 2$, 
    there are positive constant $c_1, c_2$ such that for any vertex $v \in V(\tH)$ and non-negative integer $r$:
    \[
        c_1 \rho(B_H)^r \leq |B_r(v)| \leq c_2 \rho(B_H)^r.
    \]
\end{theorem}

\begin{corollary}\label{corollary:treeGrowthRate}
    For any finite undirected connected graph $H$ with $\mindeg(H) \geq 2$ and $\maxdeg(H) > 2$,
    the growth rate of its universal cover $\tH$ is equal to the spectral radius of $B_H$: 
    $$\rho(\tH) = \rho(B_H).$$    
\end{corollary}

We provide two distinct proofs for Theorem~\ref{theorem:treeBallSize}. 
The first utilizes the algebraic structure of the non-backtracking matrix $B_H$, relying on its spectral properties as described in Proposition~\ref{proposition:max_abs_lambda_simple}. 
The second employs combinatorial counting methods.

The fact that $\rho(\tH)$ and $\rho(B_H)$ are the same, allows us to simplify notation and denote this number by $\rho(H)$, or even just $\rho$ if the underlying graph is clear from the context.

\begin{proof}[Proof I of Theorem~\ref{theorem:treeBallSize}]
    Let $H$ be a graph satisfying the hypotheses, let $\tH$ be its universal cover, and let $\pi$ be the respective cover map.
    Let $v$ be an arbitrary vertex in $\tH$.
    Then, the size of the radius $r$ ball centered at $v$, is
    \begin{equation}\label{eqn:tree_ball_size}
        |B_r(v)|  = 1 + \sum_{i=1}^r \Delta_i,
    \end{equation}
    where $\Delta_i=\Delta_i(v)$ denotes the number of length $i$ non-backtracking walks from $v$.
    The quantity $\Delta_i$ can be expressed using the non-backtracking matrix $B_H$ as
    \begin{equation}\label{eqn:tree_ball_size2}
        \Delta_i = \delta_v^T (B_H)^{i-1} \one,
    \end{equation}
    where $\one$ is the all-one vector indexed by the set of directed edges $\dirE$, and $\delta_v$ is the indicator vector on the edges emanating from $\pi(v)$, where the entry $(\delta_v)_e$ is one if $t(e) = \pi(v)$ and zero otherwise.
    
    Let $B_H = V^{-1} \left[\bigoplus_{j=1}^m J_j\right] V$ be the Jordan decomposition of $B_H$.
    Since $B_H$ is non-negative and irreducible, by Proposition~\ref{proposition:glover2021non},  
    it follows by Proposition~\ref{proposition:max_abs_lambda_simple} that the maximum-modulus eigenvalues correspond to $k$ scalar $1 \times 1$ Jordan blocks.
    Thus, the decomposition can be rearranged so that the Jordan blocks $J_p$ for $p=1,\ldots,k$ are the scalars $J_p = (\rho \, \omega_k^{p-1})$, where
    $\rho = \rho(B_H)$ and $\omega_k = e^{2 \pi i/k}$ (here $i$ denotes $\sqrt{-1}$).    
    
    Moreover, the left and right Perron eigenvectors of $B_H$, satisfying $\y^T B_H = \rho \y^T$ and $B_H \x = \rho \x$, are the first row of $V$ and first column of $V^{-1}$, 
    i.e. $\y^T = (V)_{1,\cdot}$ and $\x = (V^{-1})_{\cdot,1}$.
    
    Denoting the zero padded $J_j$ blocks by $\widehat{J_j}$, we write $B_H$ as
    \begin{equation*}
        B_H = V^{-1} \left[\sum_{j=1}^m \widehat{J_m}\right] V = V^{-1} \left[\sum_{p=1}^k E_p \rho \, \omega_k^{p-1} + R\right] V,
    \end{equation*}
    where the matrix $E_p$ is all-zero except for a one in its $p$-th diagonal entry, and the residue matrix $R=\sum_{j=p+1}^m \widehat{J_j}$ satisfies the condition $\rho' = \rho(R) < \rho$.

    Applying this decomposition to the expression for $\Delta_i$ yields:
    \begin{align*}
        \Delta_i 
        &= \rho^{i-1} \sum_{p=1}^k (\delta_v^T V^{-1} E_p V \one) \, \omega_k^{(p-1)(i-1)} + \delta_v^T V^{-1} R^{i-1} V \one \\
        &= \rho^{i-1} \sum_{p=1}^k a_p \, \omega_k^{(p-1)(i-1)} + r_i,
    \end{align*}
    where $a_p = \delta_v^T V^{-1} E_p V \one$ and the residue term $r_i = \delta_v^T V^{-1} R^{i-1} V \one$.
    
    The residue term $r_i$ is bounded by:
    \begin{equation*}
        |r_i| \leq \|\delta_v\| \rho(V^{-1}) \rho(R)^{i-1} \rho(V) \|\one\| = \Big(\|\delta_v\| \rho(V^{-1}) \rho(V)\Big) (\rho')^i = o(\rho^i) \,\,\text{ as }i \rightarrow \infty.
    \end{equation*}
    
    Furthermore, $a_p$ are complex constants, and the coefficient $a_1$, corresponding to the Perron eigenvalue $\rho$, is positive: 
    \[
        a_1 = \delta_v^T V^{-1} E_1 V \one = \delta_v^T (V^{-1} E_1) (E_1 V) \one = \delta_v^T \x \y^T \one > 0.
    \]
    
    Now, let $i-1 \equiv s \pmod{p}$, for $s \in \{0, \ldots, k-1\}$. 
    Then, we can rewrite $\Delta_i$ as :
    \[
        \Delta_i = \rho^{i-1} \sum_{p=1}^k a_p \, \omega_k^{(p-1)s} + r_i = \rho^{i-1} \widehat{a_{s}} + r_i,
    \]
    where $\widehat{a_{s}} = \sum_{p=1}^k a_p \, \omega_k^{(p-1)s}$. 
    The sequence $\{\widehat{a_{s}}\}_{s=0,\ldots,k-1}$ is not all zero, because the coefficients $\{a_p\}_{p=1 \ldots k}$ are not all zero and the matrix $\{\omega_k^{(p-1)s}\}_{p,s}$ is non-singular, being a Vandermonde matrix (specifically, the discrete Fourier transform matrix). 
    
    Since $|r_i|=o(\rho^i)$, and since there exists at least one value $s_0$ for which $\widehat{a_{s_0}} \neq 0$, 
    it follows that $\Delta_i = \Theta(\rho^i)$ for all $i$ with $i-1 \equiv s_0 \pmod{p}$. 
    Furthermore, as $\Delta_i$ is a monotone non-decreasing function of $i$, it follows that $\Delta_i = \Theta(\rho^i)$ for all $i$. 
    
    Finally, as the assumptions on $H$ imply that $\rho > 1$, the sum in \eqref{eqn:tree_ball_size} is asymptotically dominated by its last term, $\Delta_r$, implying that $|B_r(v)|=\Theta(\rho(B_H)^r$ as $r \rightarrow \infty$, as claimed. 
\end{proof}

\begin{proof}[Proof II of Theorem~\ref{theorem:treeBallSize}]
    As in the first proof, let $H$ be a graph satisfying the hypotheses, let $\tH$ be its universal cover, let $\pi$ be the respective cover map and let $v$ be an arbitrary vertex in $\tH$.
    The non-backtracking adjacency matrix $B_H$ is non-negative irreducible by Proposition~\ref{proposition:glover2021non}, 
    implying that $\rho = \rho(B_H)$ is an eigenvalue of $B_H$ with strictly positive left eigenvector $\y$, satisfying $\y^T B_H = \rho \, \y^T$.
    By \eqref{eqn:tree_ball_size} and \eqref{eqn:tree_ball_size2} we express the size of the radius $r$ ball centered at $v$ by
    \begin{equation*}
        |B_r(v)|  = 1 + \sum_{i=1}^r \deltav^T (B_H)^{i-1} \one,
    \end{equation*}
    where $\one$ is the all-one vector indexed by the set of directed edges $\dirE$, and $\deltav$ is the indicator vector on the edges emanating from $\pi(v)$, where the entry $(\deltav)_e$ is one if $t(e) = \pi(v)$ and zero otherwise.
    Also, as $B_H$is non-negative irreducible, all edges in $\dirE$ are reachable from $v$, 
    implying that for some integer $r_0$ the following vector is strictly positive:
    \begin{equation*}
        \z^T = \sum_{i=1}^{r_0} \deltav^T (B_H)^{i-1}.
    \end{equation*}
    It follows that there is a constant $c_1>0$ such that $c_1 \y \leq \z$ and therefore for all $r \geq r_0$ we can lower-bound the size of the radius $r$ ball centered at $v$ by
    \begin{equation*}
        |B_r(v)| \geq \sum_{i=r-r_0+1}^r \deltav^T (B_H)^{i-1} \one = \z^T (B_H)^{r-r_0} \one \geq c_1 \rho^{r-r_0} \y^T \one \geq \Omega(\rho^r).\\
    \end{equation*}

    As for the upper bound, as $\y$ is positive, there is a constant $c_2 >0$ so that $\deltav \leq c_2 \y$, and therefore
    \begin{equation*}
        |B_r(v)| \leq 1 + c_2 \sum_{i=1}^r \y^T (B_H)^{i-1} \one 
                  = 1 + c_2 (\y^T \one) \sum_{i=1}^r \rho^{i-1}.
    \end{equation*}
    By the hypotheses on $H$ we have $\rho=\rho(B_H) > 1$, implying that the geometric sum is $O(\rho^r)$.
\end{proof}

\section{Moore and Erd\"{o}s-Sachs Bounds for Graph Lifts}
\label{section:bounds}

In this section we prove the non-asymptotic form of Theorem~\ref{theorem:main} by means of Lemma~\ref{lemma:mooreLift} for the lower bound and Corollary~\ref{corollary:ESlift} for the upper bound. 
The asymptotic result of Theorem~\ref{theorem:main} follows by combining the non-asymptotic results with Theorem~\ref{theorem:treeBallSize}, stating that the size of the radius $r$ ball in the universal cover grows as $\Theta(\rho^r)$.

\begin{lemma}[Moore bound for lifts]\label{lemma:mooreLift}
    Let $H$ be a finite undirected connected graph of minimal degree at least two. Then:
    \begin{itemize}
        \item $n(H,g) \geq \max_{v \in V(\tH)} |B_{\lfloor g/2 \rfloor}(v)|$ if $g$ is odd
        \item $n(H,g) \geq \max_{e \in E(\tH)} |B_{\lfloor g/2 \rfloor}(e)|$ if $g$ is even
    \end{itemize}
\end{lemma}

\begin{proof}
    Suppose that $g=2r+1$ and that $G$ is a lift of $H$ with girth at least $g$.
    Then $B_r(u)$ is a tree for any vertex $u \in V(G)$.
    Since the universal covering tree $\tH$ is also a lift of $G$ with some cover map $\pi$,
    then for any $v \in V(\tH)$, we have $|B_r(v)| = |B_r(\pi(v))| \leq |V(G)|$.
    The case of  $g=2r$ is similar.
\end{proof}

\begin{lemma}[Erd\"{o}s-Sachs bound for lifts]\label{lemma:ESlift}
    Let $H$ be a finite undirected connected graph of minimal degree at least two and let $T$ be a spanning tree of $H$.
    Then for any integer $g \geq 2\diam(T)+2$ there exists a graph $G$ covering $H$ with $\girth(G) \geq g$ and $\diam(G) \leq g+2\diam(T)$.
\end{lemma}

\begin{corollary}\label{corollary:ESlift}
    Let $H$ be a finite undirected connected graph of minimal degree at least two, let $T$ be some spanning tree of $H$.
    Then for any integer $g \geq 2\diam(T)+2$:
    \begin{equation*}
        n(H,g) \leq \min_{v \in V(\tH)} |B_{g\,+\,2\diam(T)}(v)|.
    \end{equation*}
\end{corollary}

\begin{proof}[Proof of Lemma~\ref{lemma:ESlift}]

Given some base graph $H$ and spanning tree $T$ satisfying the requirements, denote $g_0 = 2\diam(T)+2$ and $D_0 = g+2\diam(T)$.
The proof consists of two parts. The first part is to prove the existence of some graph $G$, possibly very large, covering $H$ with girth at least $g$.
The second part is showing that as long as $\diam(G) > D_0$,
one can be trim $G$ into a smaller graph covering $H$ without introducing a cycle shorter than $g$.
Repeating this procedure until $\diam(G) \leq D_0$ yields the required result.
We prove the existence of an initial graph $G$ separately in Lemma~\ref{lemma:liftExists}, and concentrate here of the trimming procedure.
Throughout, we assume that $G$ is connected, as otherwise it can be replaced by one of its connected components.

Let $G$ be be some graph of girth $g \geq g_0$ covering $H$ with the cover map $\pi:G \rightarrow H$ with vertices $u',v' \in V(G)$ such that $\dist(u',v')>D_0$. 
Let $u=\pi(u')$, $v=\pi(v')$, where possibly $u,v$ are the same vertex.
Without loss of generality we identify $V(G)$ with $V(H) \times [n]$ and $E(G)$ with $E(H) \times [n]$ so that the $n$ copies of $T$ in $\pi^{-1}(T)$ respect layers. 
Namely, that for any edge $e \in E(T)$, both endpoint of each edge in $\pi^{-1}(e)$ are in the same layer. 
Furthermore, since $\dist(v',u') > D_0 > \diam(T)$ it is clear than $v'$ and $u'$ are not in the same layer, 
and we may assume that $v'=(v,n)$ and $u'=(u,n-1)$.

Let $L_i$ denote the vertices at layer $i$ and let $L_{[j]} = L_1 \cup \cdots \cup L_j$.
We define $G'$ as the induced graph of $G$ on $L_{[n-2]}$ 
and the map $\pi_{G'}$ as the restriction of the cover map $\pi:G \rightarrow H$ to $G'$.
Let $\red$ be the set of edges $e \in E(G)$ with tail in $L_{n-1} \cup L_n$ and head in $L_{[n-2]}$.
Note that $\pi_{G'}$ is not a cover map because of the edges in $\red$, which we call red edges, leave deficient vertices in $G'$, i.e. vertices $w \in V(G')$ with $\deg(w) < deg(\pi(w))$.
Since edges covering $E(T)$ respect layers, it follows that red edges are necessarily non-tree edges, covering edges in $E(H) \setminus E(T)$.
Moreover, for any non-tree edge $e \in E(G)$, at most one of its endpoints is in $L_{n-1} \cup L_n$. Indeed, if both endpoint are in the same layer $L_i$, then we have a short cycle using $e$ and the shortest path in the tree $\pi^{-1}(T)\cap L_i$ whose length is at most $1+\diam(T) < g_0$. 
Otherwise, if $t(e)=(x,n)$ and $h(e)=(y,n-1)$ then we have a short path from $v'=(v,n)$ to $u'=(u,n-1)$
of length at most $d(v',t(e)) + d(t(e),h(e)) + d(h(e),u') \leq 2\diam(T) + 1 \leq D_0$.
As both cases contradict our assumption on $G$, it follows that there are exactly two red edges above any non-tree edge $e \in E(H)\setminus T$, 
which we denote by $((t(e),n),(h(e),l(e)))$ and $((t(e),n-1),(h(e),m(e)))$.

\begin{figure}[h]
  \centering
  \includegraphics[width=0.7\textwidth,clip,trim=0mm 40mm 0mm 40mm]{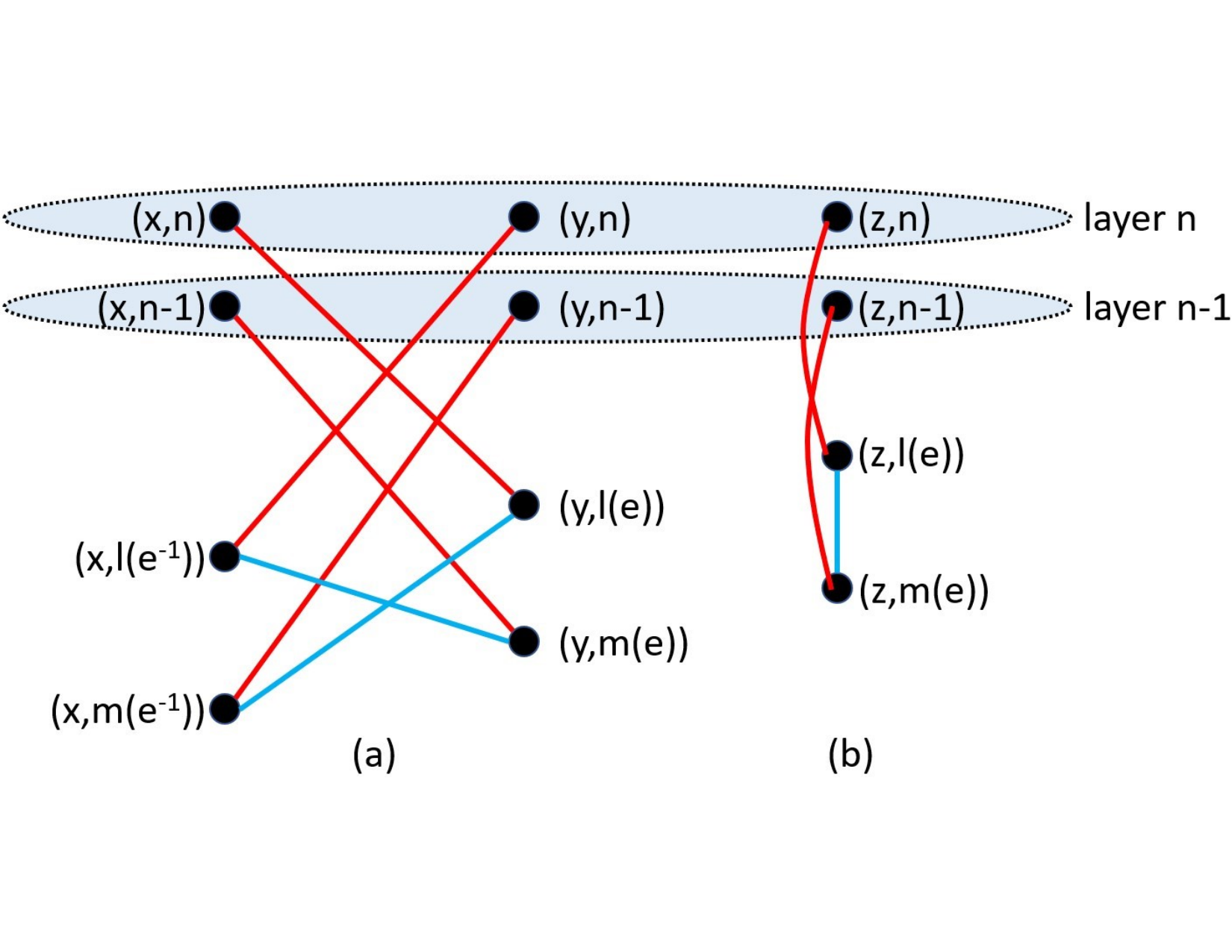}
  \caption{Deleting two layers and adding edges for deficient vertices: (a) $e=(x,y)$ is not a half-loop, (b) $e=(z,z)$ is a half loop.}
  \label{fig:deleting_two_layers}
\end{figure}

Figure~\ref{fig:deleting_two_layers} depicts the red edges above some non-tree edge in $e \in E(H) \setminus E(T)$ and its inverse.
In case (a)  $e=(x,y)$ is not its own inverse, and therefore has 4 distinct red edges, two above $e$ and two above $e^{-1}$.
In case (b) $e=(z,z)$ is equal to its inverse and it has only two red edges.

The graph $G''$ is created from $G'$ by adding the blue edges to account for the deficient vertices, as showed in Figure~\ref{fig:deleting_two_layers}.
For every $e \in E(H) \setminus E(T)$ we add the following two blue edges: $((t(e),m(e^{-1})),(h(e),l(e)))$, $((t(e),l(e^{-1})),(h(e),m(e)))$.
In case (a) the blue edges for $e^{-1}$ are the inverse of the blue edges for $e$, while in case (b) the blue edges for $e$ are already an edge and its inverse. 

We claim that $G''$ both covers $H$ and has girth at least $g$.
The first claim easily follows by defining the map $\pi_{G''}:G''\rightarrow H$ to be $\pi_{G'}$ with the newly added blue edges and checking it is a cover map.

In order to prove that $\girth(G'')$ is at least $g$, assume to the contrary that $G''$ has a length $k < g$ cycle $C$ with the edges $e_1, e_2, \ldots, e_k$. 
Since $\girth(G') \geq g$, the cycle $C$ has at least one new (blue) edge. 
We say that the new edge $e$ is positive if $t(e)$ is connected 
in $G$ by a red edge to $L_{n-1}$ and $h(e)$ is connected to $L_n$.
Otherwise, $t(e)$ is connected to $L_n$ and $h(e)$ to $L_{n-1}$ and we say that $e$ is negative. 
\begin{figure}[h]
  \centering
  \subfloat[]{{\includegraphics[width=0.7\textwidth,clip,trim=50mm 65mm 50mm 70mm]{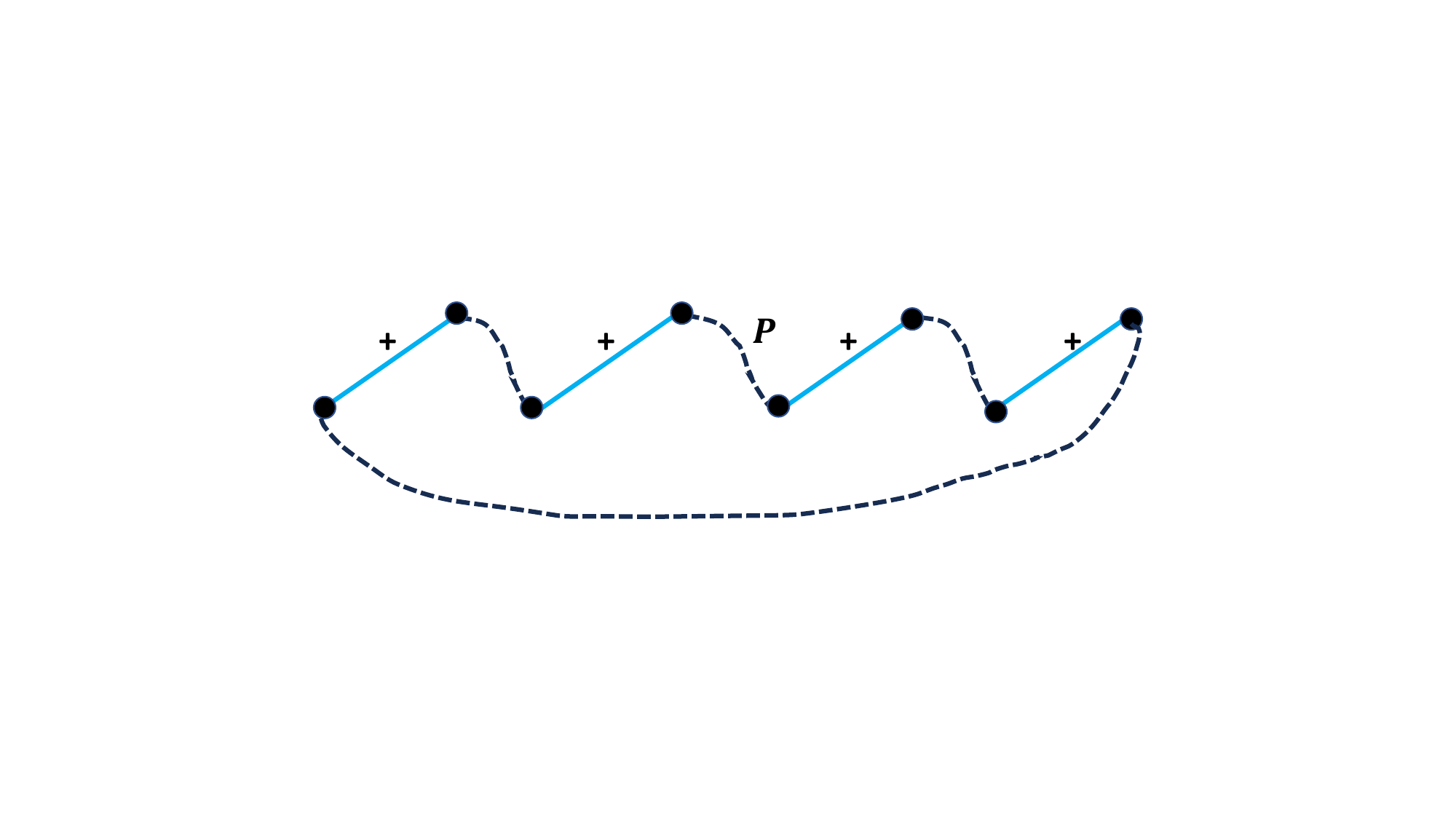}}} 
  \\
  \subfloat[]{{\includegraphics[width=0.7\textwidth,clip,trim=40mm 55mm 40mm 70mm]{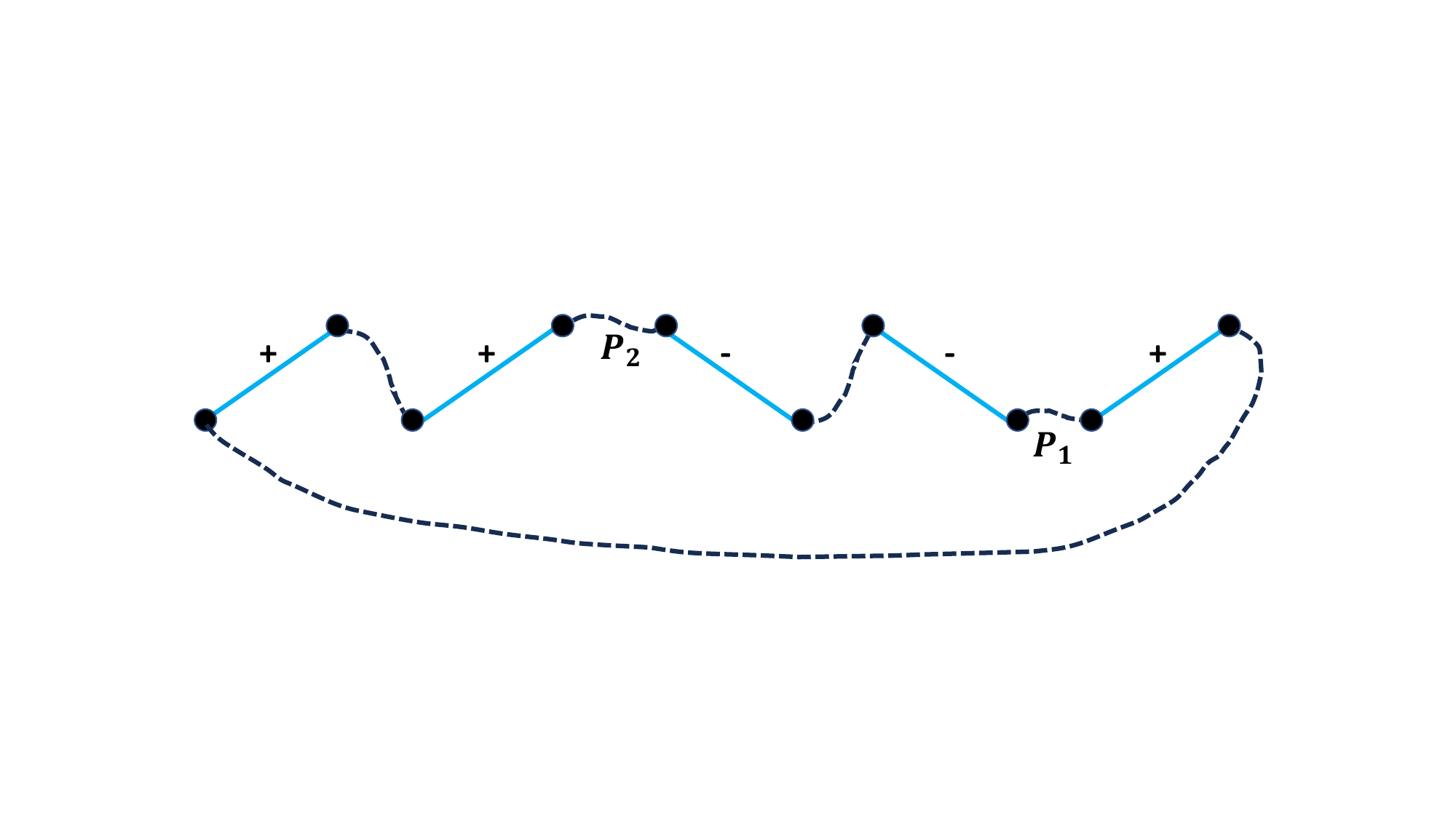}}}
  \caption{The cycle $C$ with new edges depicted in blue, along with their sign, and old paths existing in $G$ depicted by a black dotted line. 
           (a) All new edges have the same sign, $P$ is some old path connecting adjacent blue edges, (b) Not all new edges have the same sign, there are at least two old paths ($P_1$ and $P_2$ in this case) connecting different sign blue edges.}
  \label{fig:edge_types}
\end{figure}
If all new edges in $C$ have the same sign, as seen in Figure~\ref{fig:edge_types} (a), 
there is a path $P$ in $C$ of length at most $k-1$ that exists in $G$ and connects two same sign blue edges. 
Therefore, $P$ with two red edges forms a path  connecting $L_n$ with $L_{n-1}$.
It follows that $d_G(v',u') \leq 2\diam(T)+k-1+2 \leq 2\diam(T) + g = D_0$, 
contradicting our assumption that $\dist_G(v',u') > D_0$.

Otherwise, not all new edges have the same sign, as seen in Figure~\ref{fig:edge_types} (b).
Therefore, there are at least two paths $P_1$ and $P_2$ in $C$, present in the original graph $G$, 
connecting two different sign new edges.
Choosing $P$ to be the shortest of the two paths, yields a path $P$ of length at most $\lfloor (k-2)/2 \rfloor$, with endpoints $h(e)$ and $t(f)$ for new edges $e,f$.
Furthermore, both $h(e)$ and $t(f)$ are connected by red edges in $G$ to the same layer, either $L_n$ or $L_{n-1}$.
Therefore, $G$ has a short cycle consisting of $P$, two red edges and a traversal of the tree $T$.
The length of this cycle is at most 
$\lfloor (k-2)/2 \rfloor + 2 + \diam(T) \leq k/2 + 1 + \diam(T) < g/2+1+\diam(T)$, which is less equal than $g$ for $g \geq g_0$, yielding a contradiction.
\end{proof}

\begin{lemma}\label{lemma:liftExists}
For any undirected connected graph $H$ with minimal degree at least two and any $g \geq 1$, there exists a covering graph $G$ with $\girth(G) \geq g$.
\end{lemma}

\begin{proof}
    Assume first that $H$ does not have half-loops.
    The proof is by induction on $g$, where the base $g=1$ trivially holds for $G=H$.
    By induction, assume that $G$ is a graph covering $H$ with $\girth(G) \geq g-1$, where the number of length $g-1$ cycles $\phi(G)$ is as small as possible.
    If $\phi(G) = 0$, then $\girth(G) \geq g$, and we are done. 
    Otherwise, assume $\phi(G)>0$ and let $G_2$ be a random 2-lift of $G$. 
    Namely, a random graph with vertex set $V(G) \times \{0,1\}$,
    where for each edge $e \in \dirE(G)$ we pick the permutation $\perm_\pi(e) \in S_2$ independently at random such that $\perm_\pi(e)=\perm_\pi(e^{-1})$.
    We observe that all 2-lifts of $G$ have girth at least $g-1$, and argue that among them there is a 2-lift with a smaller number of length $g-1$ cycles. 
    Indeed, as every length $g-1$ cycle $C$ in $G$ is turned into either one cycle of length $2(g-1)\geq g$ or two cycles of length $g-1$, with equal probabilities,
    depending on the product of $\perm_\pi(e)$ for $e \in C$, it follows that the expected value of $\phi(G_2)$ is $\phi(G)$.
    However, since the lift with $\perm_\pi(e)=\text{id}$ for all $e$ has $2\phi(G)$ length $g-1$ cycles,
    it follows that there exists some 2-lift $G'$ of $G$ with $\phi(G') < \phi(G)$, yielding a contradiction.

    If $H$ has half-loops, we let the graph $H'$ be some 2-lift of $H$, where the lift of the half-loop $e$ with $h(e)=t(e)=v$ is a single edge connecting the two vertices covering $v$.
    As the graph $H'$ covers $H$ and does not have any half-loops, the required result is obtained by applying the preceding proof to $H'$.
\end{proof}

\section{Three Generalizations for Non-regular Graphs}\label{section:comparing}
Returning to our motivating problem of finding the right generalization of $n(d,g)$ to non-regular graphs, we propose three possible definitions.
Let $n({\cal G},g)$ denote the size of the smallest girth $g$ graph in the graph family ${\cal G}$ and consider the following graph families for some fixed graph $H$:
\begin{eqnarray*}
{\cal L}(H) & = & \{ G : G \mbox{ is a lift of } H \}, \\
{\cal U}(H) & = & \{ G : \tilde{G} = \tilde{H} \}, \\
{\cal D}(H) & = & \{ G : G \mbox{ has the same degree distribution as } H \}.
\end{eqnarray*}
Respectively, $n(H,g)=n({\cal L}(H),g)$ is the smallest girth $g$ lift of $H$, 
$n({\cal U}(H),g)$ is the smallest girth $g$ graph with the same universal cover as $H$
and $n({\cal D}(H),g)$ is the smallest girth $g$ graph with the same degree distribution as $H$.
Since ${\cal L}(H) \subseteq {\cal U}(H) \subseteq {\cal D}(H)$ it follows that:
\begin{equation}\label{eqn:naive_graph_size}
    n({\cal L}(H),g) \geq n({\cal U}(H),g) \geq n({\cal D}(H),g).
\end{equation}

\noindent
It turns out that the first inequality cannot be too far from equality:
\begin{lemma}
For every finite, undirected and connected graph $H$, there exists a constant $c(H)$ such that:
\begin{equation*}
c(H) \cdot n({\cal U}(H),g) \geq n({\cal L}(H),g) \geq n({\cal U}(H),g).
\end{equation*}
\end{lemma}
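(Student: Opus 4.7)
The right-hand inequality is immediate from (\ref{eqn:naive_graph_size}), since $\mathcal{L}(H) \subseteq \mathcal{U}(H)$ and minimizing over a smaller family can only produce a larger number. So only the left-hand inequality requires work.

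For the left-hand inequality, the plan is to pick a minimum-size graph $G \in \mathcal{U}(H)$ with $\girth(G) \geq g$, so that $|V(G)| = n(\mathcal{U}(H), g)$, and then to invoke Leighton's common cover theorem~\cite{leighton1982finite} to produce a finite graph $K$ that covers both $G$ and $H$ simultaneously. Since a covering map is a local isomorphism, the projection of a shortest cycle of $K$ is a non-backtracking closed walk in $G$ of the same length; locating the first repeated vertex along this walk yields a cycle of $G$ of length at most $\girth(K)$, so $\girth(K) \geq \girth(G) \geq g$. Because $K$ also covers $H$, it lies in $\mathcal{L}(H)$, giving $n(\mathcal{L}(H), g) \leq |V(K)|$.

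The remaining step is to bound $|V(K)|$ by $c(H) \cdot |V(G)|$. This is supplied by a quantitative form of Leighton's theorem: for every finite graph $H$ there is a constant $c(H)$ such that, for any $G$ sharing the universal cover of $H$, a common finite cover of $G$ and $H$ can be chosen with $|V(K)| \leq c(H) \cdot |V(G)|$. Group-theoretically, the deck groups $\Gamma_G, \Gamma_H \leq \text{Aut}(\tilde H)$ are commensurable after suitable conjugation, and one can select a subgroup of $\Gamma_G$ whose index is bounded by a function of $H$ alone; the resulting quotient $K = \tilde H / (\Gamma_G \cap \Gamma_H^{\gamma})$ is the desired common cover. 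Since $c(H)$ depends only on the combinatorics of $H$ and not on $G$ or on $g$, the inequality follows for every $g$ with the same constant.

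\textbf{Main obstacle.} The qualitative existence of a common cover is classical; the delicate ingredient is the uniform quantitative bound $|V(K)| \leq c(H) \cdot |V(G)|$ with the multiplicative constant depending only on $H$, independent of the particular $G \in \mathcal{U}(H)$ and of the girth parameter $g$. Extracting this uniform constant from the Leighton machinery is where the heart of the proof lies; everything else reduces to ``a cover of $H$ is a lift of $H$'' and ``covers do not decrease girth.''
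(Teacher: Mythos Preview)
Your proposal is correct and follows essentially the same approach as the paper: both invoke the quantitative form of Leighton's common-cover theorem~\cite{leighton1982finite} to produce a finite cover of both $H$ and a minimum-size $G\in\mathcal{U}(H)$, with the size of the common cover bounded by a constant (depending only on the universal cover, hence only on $H$) times $|V(G)|$. The paper's proof is terser and does not spell out the girth-preservation step or the deck-group picture, but these are routine elaborations of the same argument.
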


\begin{proof}
As the right inequality is part of~(\ref{eqn:naive_graph_size}) we only need to prove the left inequality. 
Assume that $G$ is a girth $g$ graph with the same universal cover as $H$.
By a result of Leighton~\cite{leighton1982finite}, we know that for every two graphs $G_1, G_2$ sharing the same universal cover, there is a graph covering both whose size is at most the product of their sizes times a constant depending only on the universal cover.
Applying that result with $G_1=H$ and $G_2=G$ yields a graph $G'$ covering both, as claimed.
\end{proof}

\noindent
For the degree distribution based lower bound, we have the following result:
\begin{proposition}[Alon, Hoory, Linial,~\cite{alon2002moore}]\label{proposition:AHL2002}
Let $G=(V,E)$ be an undirected girth $g$ graph on $n$ vertices with minimal degree at least two, and let:
$$\Lambda = \prod_{v \in V} {(d_v-1)}^{d_v/|\dirE|}, \;\;\;\;\;
n_0(x+1,g) = \sum_{i=0}^{\lfloor(g-1)/2\rfloor} x^i + \sum_{i=0}^{\lfloor g/2-1\rfloor} x^i.$$
Then:
\begin{enumerate}
    \item $n \geq n_0(\Lambda+1,g)$:
    \item $\Lambda \geq \overline{d} - 1$, where $\overline{d}$ is the average degree of $G$, with equality only if $G$ is regular.
    \item 
    $\frac{1}{|\dirE|} \one^T (B_H)^r \one \geq \Lambda^r$ for all $r \geq 0$.
\end{enumerate}
\end{proposition}

\noindent
It follows that:
\begin{equation}\label{eqn:dist-lb}
    n({\cal D}(H),g) \geq n_0(\Lambda(H)+1,g) \geq n_0(\overline{d}(H),g).
\end{equation}

\noindent
This result is related to the $\rho$ based lower bound of Theorem~\ref{theorem:main} by the following proposition. In that work, the authors prove that $\rho(H) \geq \Lambda(H)$ and specify two equivalent conditions for equality to hold.
\begin{proposition}[Eisner, Hoory,~\cite{eisner2024entropy}]\label{proposition:rhoLambda}
    For a finite undirected and connected graph $H$ with minimal degree at least two and maximal degree greater than two, we have $\rho(H) \geq \Lambda(H)$.
    Furthermore, the following three conditions are equivalent:
    \begin{enumerate}
        \item $\rho(H) = \Lambda(H)$
        \item For every non-backtracking cycle $C$ the geometric average of vertex degrees minus one along the cycle is equal to $\Lambda$,
            \begin{equation*}
                \left(\prod_{v \in C} (\deg(v)-1)\right)^{\frac{1}{|C|}} = \Lambda(H).
            \end{equation*}
        \item For every non-backtracking path $P$, where all its internal vertices (if any) have degree two and its endpoints have degree larger than two, 
            \begin{equation*}
                \left(\prod_{v \in P} (\deg(v)-1)\right)^{\frac{1}{2|P|}} = \Lambda(H).
            \end{equation*}
        \end{enumerate}
\end{proposition}

\noindent
It follows from Claim~\ref{proposition:AHL2002}, Proposition~\ref{proposition:rhoLambda}, inequalities~(\ref{eqn:dist-lb}), (\ref{eqn:naive_graph_size}) and Theorem~\ref{theorem:main} that:
\begin{equation}\label{eqn:complete-diagram}
    \begin{matrix}
        n(H,g)                 & \geq & n({\cal D}(H),g) &      \\
        \vgeq                  &      & \vgeq            &      \\
        \Theta(\rho(H)^{g/2})  & \geq & \Theta(\Lambda(H)^{g/2}) &  
        \geq \Theta((\overline{d}(H)-1)^{g/2}).
    \end{matrix}
\end{equation}

\noindent
At this point, it is worthwhile to compare the values of $\rho$, $\Lambda$ and $\overline{d}$ for some specific graphs.
Consider the three base graphs depicted in Figure~\ref{fig:three_graphs}:
\begin{figure}[h]
  \centering
  \includegraphics[width=0.7\textwidth,clip,trim=0mm 55mm 0mm 55mm]{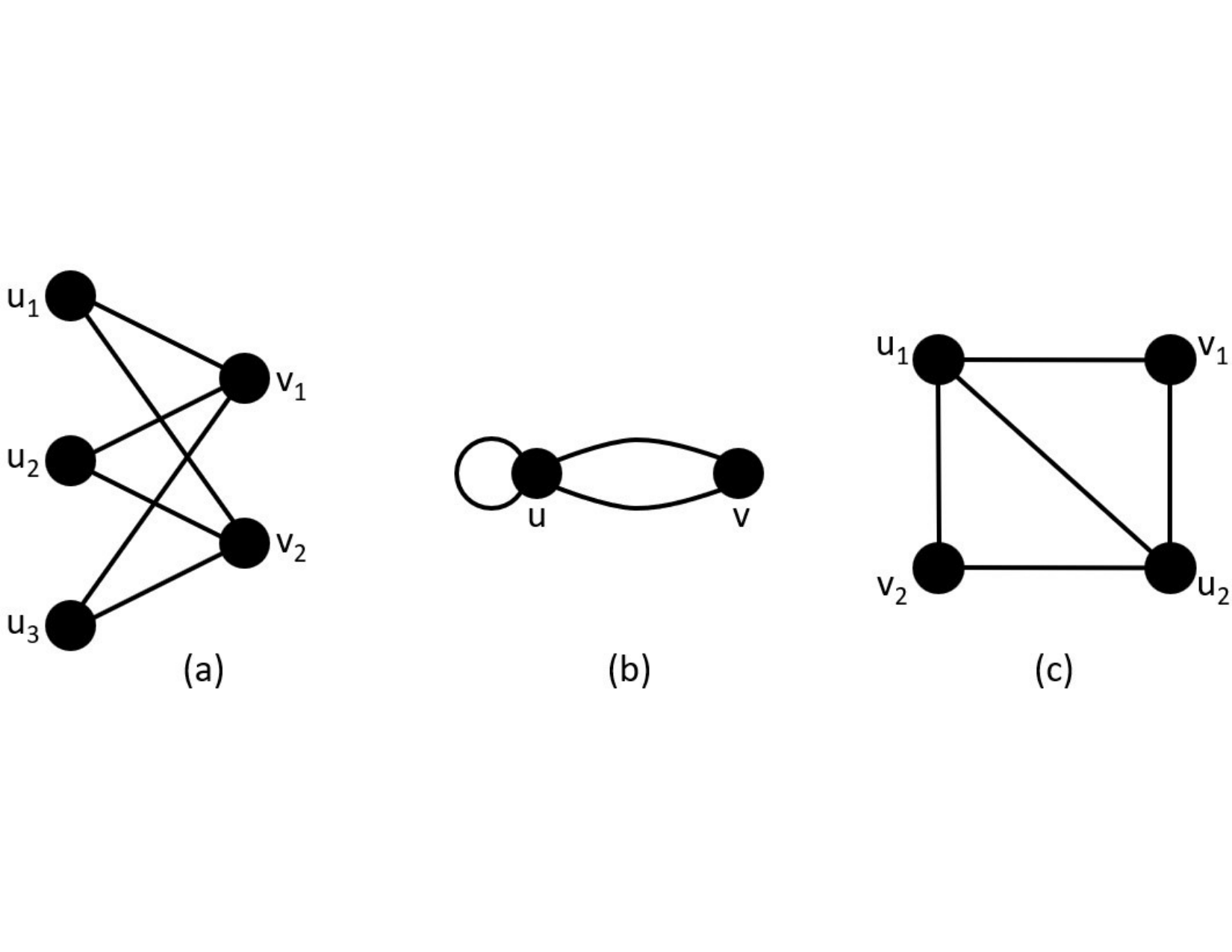}
  \caption{(a) The bipartite $K_{3,2}$ graph, (b) $H_{23}$ with a degree 2 vertex $v$ and a degree 3 vertex $u$ with a half-loop edge, and (c) $K_4$ minus an edge, which is a 2-lift of $H_{23}$.}
  \label{fig:three_graphs}
\end{figure}

\pagebreak
\begin{description}
\item[a. The complete bipartite graph $K_{3,2}$]
\begin{itemize}
    \item[]
    \item $\overline{d}(K_{3,2})-1=(3+3+2+2+2)/5 - 1 = 1.4$
    \item $\Lambda(K_{3,2}) = 2^{(3+3)/12}=\sqrt{2}$.
    \item The universal cover of $K_{3,2}$ is the 3-regular infinite tree with a degree 2 vertex added to the middle of each edge. Therefore, $B_r(\cdot)$ has alternating layers of degree 2 and degree 3 vertices, as seen in Figure~\ref{fig:infinite_trees}~(a), yielding $\rho(K_{3,2}) = \sqrt{2}$. Note that $\rho$ can also be computed as the Perron eigenvalue of a 12 by 12 matrix, $B_{K_{3,2}}$ that can be reduced to a 2 by 2 matrix, as there are only two directed edge types:
    \begin{equation*}
        \rho(K_{3,2}) = \rho\left( \left[
            \begin{tabular}{cc} 0&1\\ 2&0 \end{tabular}
        \right] \right) = \sqrt{2}.
    \end{equation*}
\end{itemize}
\item[b. The graph $H_{23}$]
\begin{itemize}
    \item[]
    \item $\overline{d}(H_{23})-1=(3+2)/2 - 1 = 1.5$
    \item $\Lambda(H_{23}) = 2^{3/5}=1.5157\ldots$.
    \item 
    As $H_{23}$ has 5 directed edges, $\rho(H_{23})$ is the Perron eigenvalue of the 5 by 5 matrix $B_{H_{23}}$. But, as there are only 3 directed edge types: $(u,v)$, $(u,u)$, $(v,u)$, it suffices to compute:
    \begin{equation*}
        \rho(H_{23}) = \rho(B_{H_{23}}) = \rho\left( \left[
            \begin{tabular}{ccc} 0&2&1\\ 0&0&1\\ 1&0&0 \end{tabular}
        \right] \right) = 1.5214\ldots
    \end{equation*}
\end{itemize}
\item[c. The graph $K_4$ minus an edge]
\begin{itemize}
    \item[]
    \item
    The graph is a 2-lift of $H_{23}$, so $\overline{d}$, $\Lambda$ and $\rho$ are the same.
\end{itemize}
\end{description}

It should be observed that because all three graphs are non-regular, we have strict inequality $\Lambda(H) > \overline{d}(H)-1$, in agreement with Claim~\ref{proposition:AHL2002}. Furthermore, $\rho(H) > \Lambda(H)$, in agreement with Proposition~\cite{eisner2024entropy}: 
\begin{eqnarray*}
\rho(K_{3,2}) &=& \Lambda(K_{3,2}) = \sqrt{2},\\
\rho(H_{23}) = 1.5214\ldots &>& \Lambda(H_{23}) = 2^{0.6} = 1.5157\ldots.
\end{eqnarray*}

\begin{figure}[h]
  \centering
  \subfloat[]{{\includegraphics[width=0.50\textwidth,clip,trim=40mm 10mm 40mm 10mm]{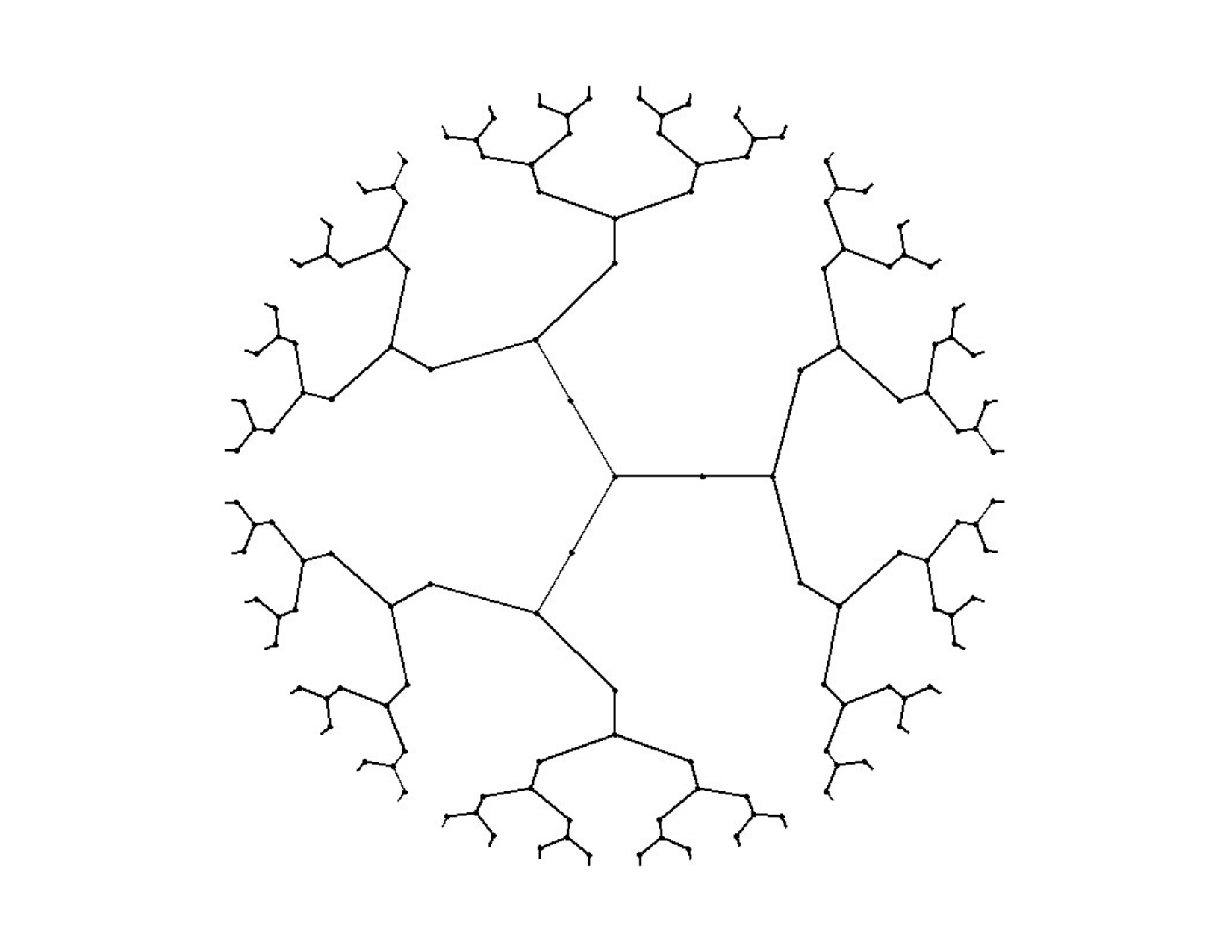}}} 
  \qquad
  \subfloat[]{{\includegraphics[width=0.45\textwidth,clip,trim=35mm 0mm 35mm 0mm]{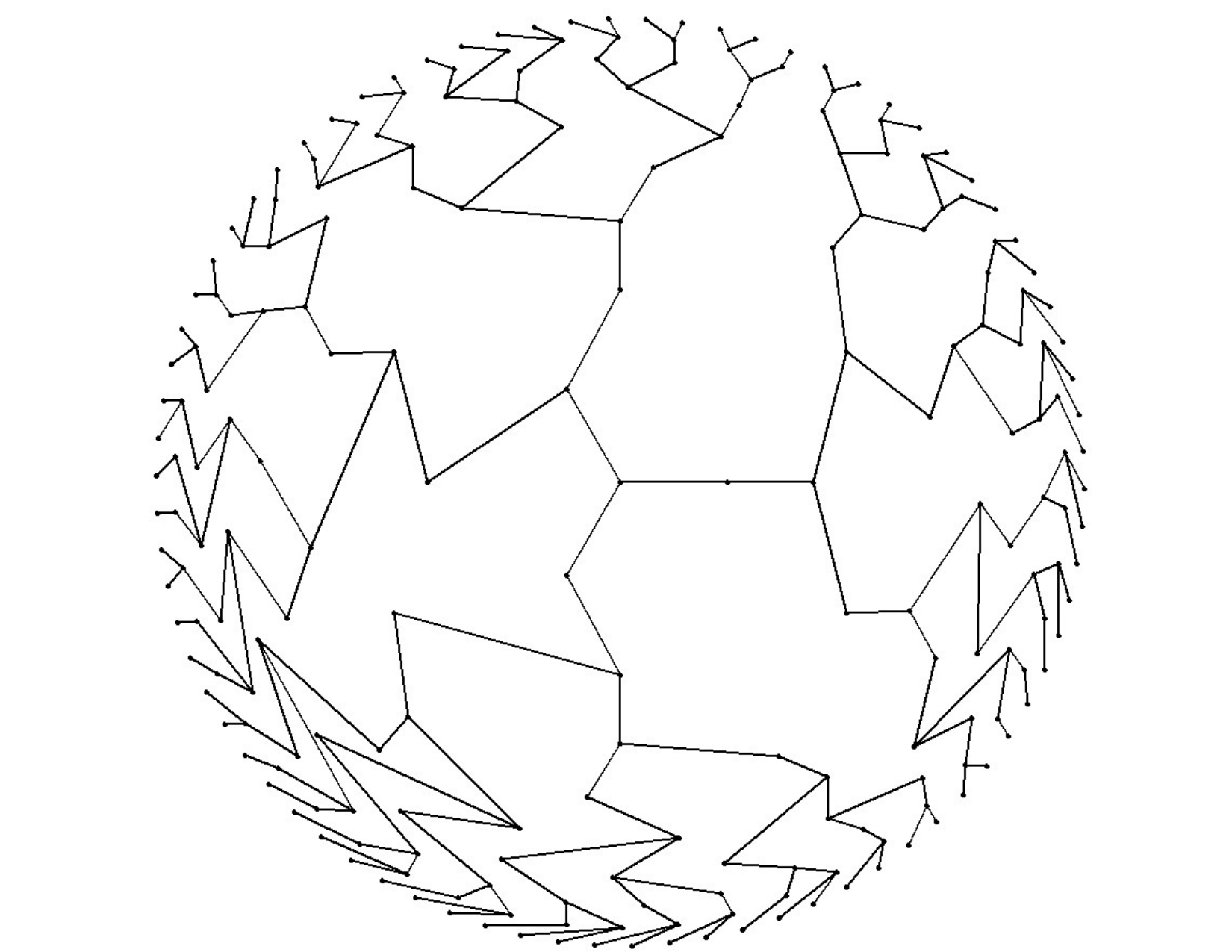}}}
  \caption{Two infinite trees: (a) $\widetilde{K_{3,2}}$ with $\rho = \sqrt{2}$, (b) $\widetilde{H_{23}}$ with $\rho = 1.5214\ldots$.}
  \label{fig:infinite_trees}
\end{figure}

\vskip 5pt \noindent The following diagram summarizes what we know about $H_{23}$:
\begin{equation}\label{eqn:h23-diagram}
    \begin{matrix}
        n(H_{23},g)                   & \geq & n({\cal D}(H_{23}),g) &     \\
        \vgeq                         &      & \vgeq            &          \\
        \Theta((1.5214\ldots)^{g/2})  & >    & \Theta((1.5157\ldots)^{g/2})
    \end{matrix}
\end{equation}

\section{Concrete Constructions and Bounds for the $H_{23}$ Graph}\label{section:concrete}

In this section we explore non-asymptotic upper and lower bounds on the value of $n(H,g)$, for the graph $H=H_{23}$ depicted in Figure~\ref{fig:three_graphs}~(b). 
We discuss this in the following subsections: The Moore lower bound, The Erd\"{o}s-Sachs Upper Bound, Construction Algorithms, and Results.

\subsection{The Moore Lower bound}
The Moore lower bound for lifts is stated in Lemma~\ref{lemma:mooreLift}, where the size of the ball centered around some tree vertex can be obtained by \eqref{eqn:tree_ball_size} and \eqref{eqn:tree_ball_size2}, 
with $g = 2r + 1$:
\begin{equation}\label{eqn:exac_Moore}
    n(H,g) 
    \geq n_0(H,g) 
    = \max_{v \in V(\tilde{H})} |B_r(v)| 
    = 1 + \max_{v \in V(H)} \sum_{i=1}^r \delta_v^T \left(B_H\right)^{i-1} \one,
\end{equation}
where $\delta_v$ is the indicator function of $t^{-1}(v)$. 
One can derive a similar formula for even girth.

\subsection{The Erd\"{o}s-Sachs Upper Bound}
We compute the Erd\"{o}s-Sachs upper bound for $n(H,g)$ using Corollary~\ref{corollary:ESlift}.
As the spanning tree $T$ has diameter one, it follows that for all $g \geq g_0 = 4$:
\begin{equation*}
  n(H,g) \leq \min_{v \in V(\tilde{H})} |B_{g+2}(v)|
    = 1 + \min_{v \in V(H)} \sum_{i=1}^{g+2} \delta_v^T \left(B_H\right)^{i-1} \one.
\end{equation*}

\subsection{Greedy Construction Algorithms}
In order to get a clearer picture regarding the behavior of $n(H,g)$ we made several attempts to construct small girth $g$ graphs that cover $H_{23}$ by randomized greedy algorithms. Our starting point, Algorithm A, is an adaptation to $H_{23}$ of the Linial and Simkin~\cite{linial2021randomized} algorithm for regular graphs. 
The algorithms we describe are split into two categories:
\begin{enumerate}
    \item 
    Start with the initial graph $G_0$ being the cycle $C_n$ with vertex set $\{0,\ldots n-1\}$ where $i$ is connected to $i \pm 1 \pmod n$. Keep adding edges, one at a time, until either the required graph is obtained, or there is no suitable edge to add. 
    In step $k$, given the current graph $G_k$, let $P_k$ denote the graph of permissible new edges, where $V(P_k)$ is the set of deficient $G_k$ vertices, $V(P_k)=\{i : i \mbox{ is even and }\deg_{G_k}(i)=2\}$, and $E(P_k)$ is the set of edges that can be added to $G_k$ without creating a cycle shorter than $g$. 
    In step $k$ the algorithm randomly picks an edge $e$ in $E(P_k)$ and adds it to $G_k$, where algorithms A,B and C differ only in the way they pick the edge $e$.  
    For each $n,g$ pair, we perform multiple runs of the selected algorithm and check if at least one is successful. 
    \item
    Start with the initial graph $G_0$ being the 4 vertex clique $K_4$ minus an edge, depicted in Figure~\ref{fig:three_graphs}~(c).
    If at step $k$ we have $\girth(G_k) \geq g$ then stop. Otherwise randomly choose two edges $e,f \in E(G_k)$, where the endpoints of each edge have different degrees, and obtain the graph $G_{k+1}$ by applying the operation shown in Figure~\ref{fig:growth_transform_k4m}.
    Both algorithms GD and GF fall into this framework and differ only in the way they pick $e,f$. 
    For each value of $g$ we perform multiple runs of the selected algorithm and take the minimal size resulting graph. 
    \begin{figure}[h]
        \centering
        \includegraphics[width=0.7\textwidth,clip,trim=40mm 75mm 40mm 75mm]{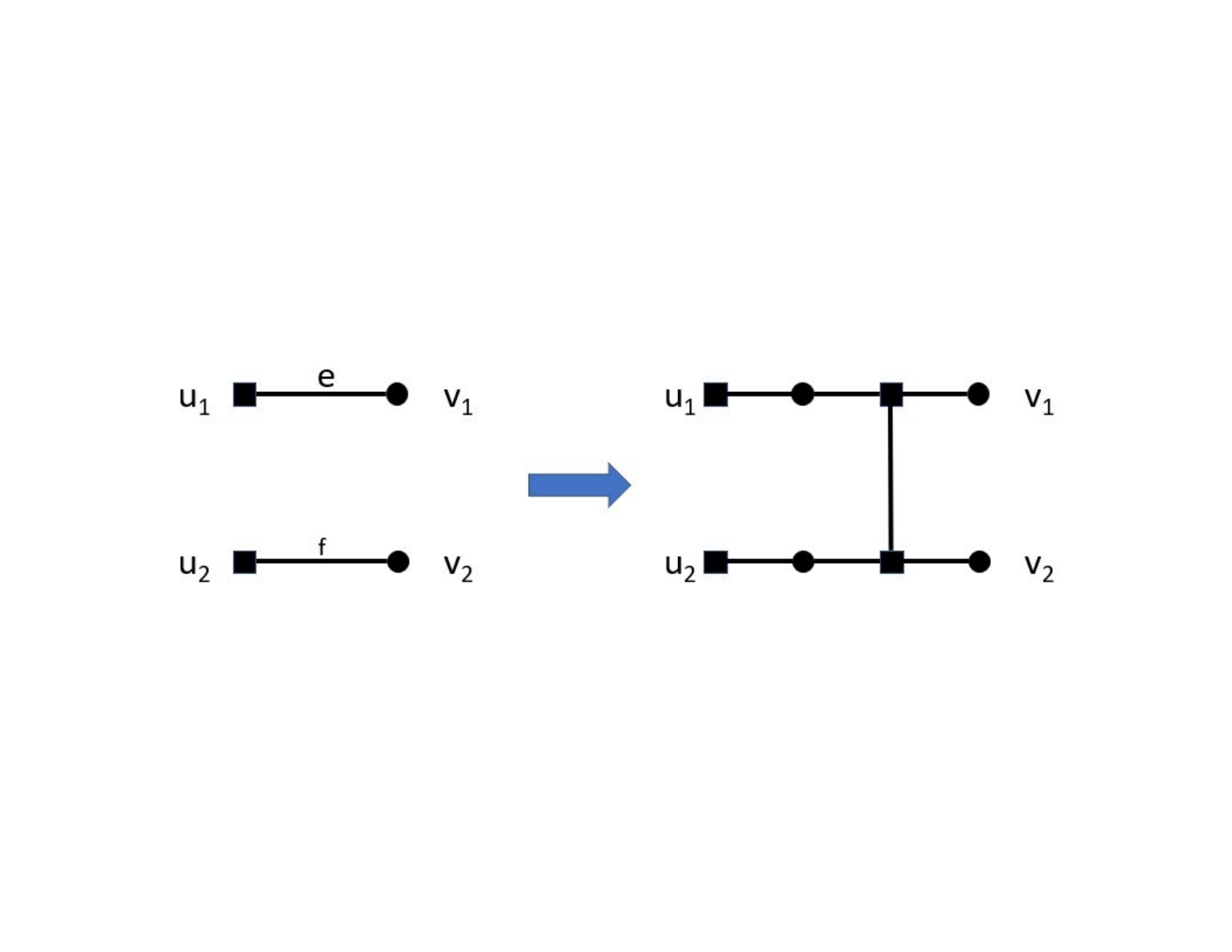}
        \caption{Transform a covering graph of $H$ by picking two edges $e=(u_1,v_1)$, $f=(u_2,v_2)$ with $\deg(u_i)=3$ (marked by a square) and $\deg(v_i)=2$ (marked by a circle). The new graph has 4 new vertices, 2 of degree two and 2 of degree three, and also covers $H$.}
        \label{fig:growth_transform_k4m}
    \end{figure}

\end{enumerate}

\begin{description}
\item{Algorithm A} - Start with the cycle $C_n$ as above, where in step $k$ we pick $e \in E(P_k)$ uniformly at random.
\item{Algorithm B} - The same, but in step $k$ we pick a uniformly random $u \in V(P_k)$ and then pick $e=(u,v) \in E(P_k)$ where $v$ is a uniformly random $P_k$ neighbor of $u$.
\item{Algorithm C} - The same, but in step $k$ we pick a uniformly random $u \in V(P_k)$ from the vertices with smallest $\deg_{P_k}(u)$ and then pick $e=(u,v) \in E(P_k)$ where $v$ is a uniformly random $P_k$ neighbor of $u$.
\item{Algorithm GD} - Start with $K_4$ minus an edge, where in each step one picks a uniformly random edge $e$ on a cycle of length smaller than $g$, and then choose a random $f$ from the edges that are most distant from $e$. 
\item{Algorithm GF} - 
Start with $K_4$ minus an edge, where in each step one picks a uniformly random edge $e$ maximizing the the vector $c(e)=(c_1(e),c_2(e),\ldots,c_{g-1}(e))$ 
in lexicographic order, 
where $c_l(e)$ is the number of non-backtracking cycles of length $l$ in which $e$ participates. 
Given $e=(u_1,v_1)$ and $f=(u_2,v_2)$, define $D(e,f)=\dist(v_1,v_2)+3$ and choose $f$ as follows: 
If $\max_f D(e,f) < g$ then choose a uniformly random $f$ maximizing $D(e,f)$.
Otherwise, choose a uniformly random $f$ with $D(e,f) \geq g$ and maximal $c(f)$ in lexicographic order.
\end{description}

\subsection{Result Summary}

It should be noted that any graph of girth $g \geq 3$ is necessarily simple and therefore must have an even number of degree three vertices. Consequently, $n$ must be divisible by 4, so our results can always rounded up/down to the closest multiple of 4. 
Results are presented both as a graph in Figure~\ref{fig:graph} and as a table. 
For girth between 7 and 15, we improve upon the Moore bound by performing an exhaustive search. 
Also, for $g \in \{8, 12, 13, 14, 15\}$ an exhaustive search found smaller graphs than those found by the greedy algorithms.

\begin{figure}[h]
    \centering
    \includegraphics[width=\textwidth,clip,trim=0mm 35mm 0mm 35mm]{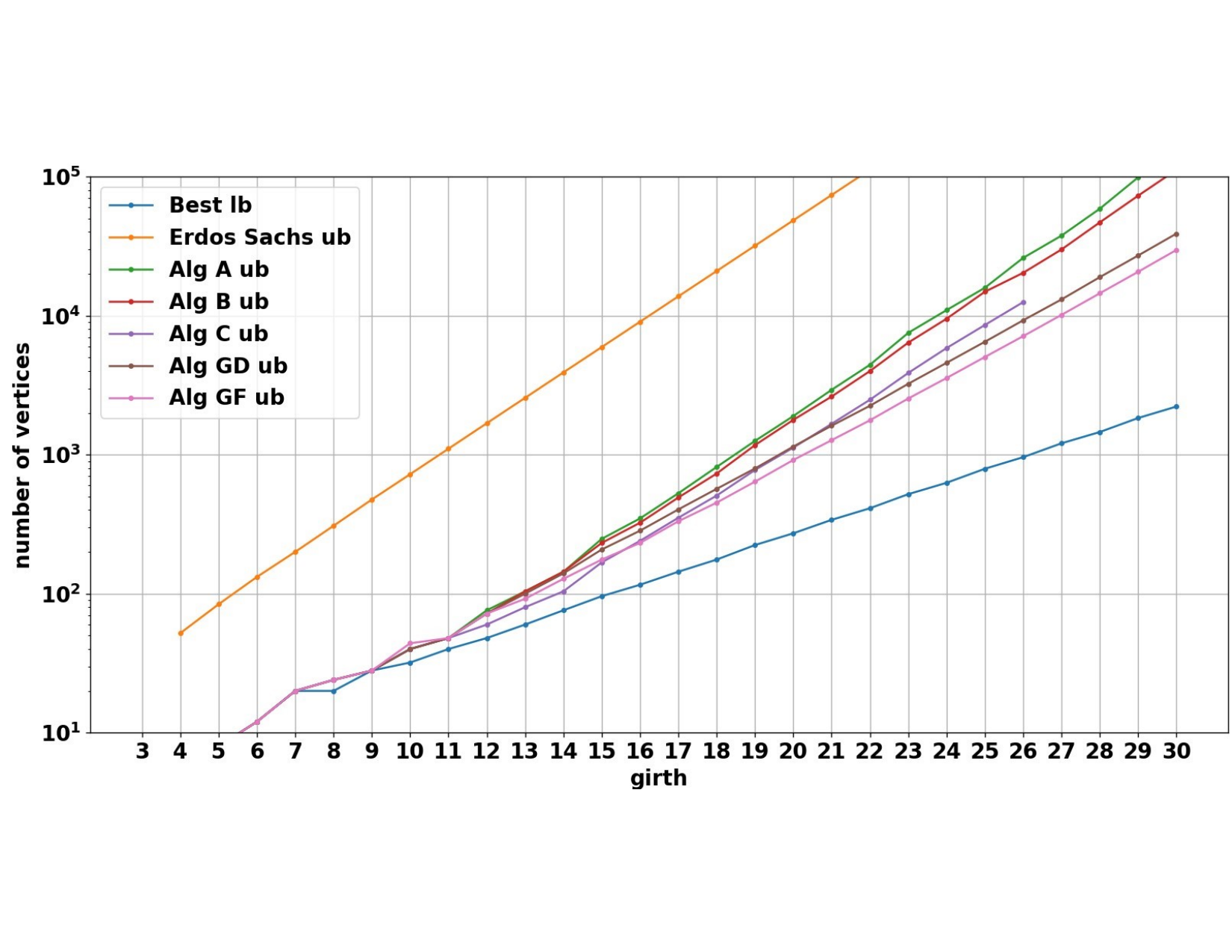}
    \caption{Lower and upper bounds on $n(H_{23},g)$}
    \label{fig:graph}
\end{figure}

\begin{center}\footnotesize
\begin{tabular}{|r|r|r|r|r|r|r|r|r|r|r|}
\hline
    & \# of & \multicolumn{2}{|c|}{Lower bounds} & \multicolumn{7}{|c|}{Upper bounds}\\
    \cline{3-11}
$g$ & trials & $n_0(H,g)$ 
                 &   best &  best &      A &      B &      C &      GD &      GF & $n_{ES}(H,g)$ \\ \hline
 3 & 100k &    4 &      4 &     4 &      4 &      4 &      4 &       4 &       4 &            -  \\ \hline
 4 & 100k &    8 &      8 &     8 &      8 &      8 &      8 &       8 &       8 &            52 \\ \hline
 5 & 100k &    8 &      8 &     8 &      8 &      8 &      8 &       8 &       8 &            84 \\ \hline
 6 & 100k &   12 &     12 &    12 &     12 &     12 &     12 &      12 &      12 &           132 \\ \hline
 7 & 100k &   16 &     20 &    20 &     20 &     20 &     20 &      20 &      20 &           200 \\ \hline
 8 & 100k &   20 &     20 &    20 &     24 &     24 &     24 &      24 &      24 &           308 \\ \hline
 9 & 100k &   24 &     28 &    28 &     28 &     28 &     28 &      28 &      28 &           476 \\ \hline
10 & 100k &   32 &     32 &    32 &     40 &     40 &     40 &      40 &      44 &           724 \\ \hline
11 & 100k &   40 &     48 &    48 &     48 &     48 &     48 &      48 &      48 &          1104 \\ \hline
12 & 100k &   48 &     52 &    52 &     76 &     72 &     60 &      72 &      72 &          1684 \\ \hline
13 & 100k &   60 &     76 &    76 &    104 &    104 &     80 &     100 &      92 &          2564 \\ \hline
14 & 100k &   76 &     88 &   88 &    144 &    144 &    104 &     140 &     128 &          3908 \\ \hline
15 & 100k &   96 &    104 &   144 &    248 &    232 &    168 &     208 &     176 &          5944 \\ \hline 
16 & 100k &  116 &    116 &   232 &    348 &    324 &    240 &     284 &     232 &          9044 \\ \hline
17 & 100k &  144 &    144 &   332 &    528 &    492 &    352 &     404 &     332 &         13772 \\ \hline 
18 & 100k &  176 &    176 &   452 &    816 &    732 &    508 &     568 &     452 &         20948 \\ \hline
19 &  10k &  224 &    224 &   640 &   1256 &   1172 &    776 &     796 &     640 &         31872 \\ \hline
20 &  10k &  272 &    272 &   916 &   1896 &   1780 &   1124 &    1144 &     916 &         48500 \\ \hline 
21 &  10k &  340 &    340 &  1272 &   2928 &   2620 &   1668 &    1616 &    1272 &         73780 \\ \hline 
22 &   1k &  412 &    412 &  1786 &   4432 &   3996 &   2484 &    2248 &    1786 &        112260 \\ \hline
23 &   1k &  520 &    520 &  2536 &   7524 &   6400 &   3872 &	 3236 &     2536 &	     170792 \\ \hline 
24 &   1k &  628 &    628 &  3560 &  10964 &   9504 &   5848 &    4580 &    3560 &	     259828 \\ \hline
25 &   1k &  792 &    792 &  5044 &  15880 &  14860 &   8568 &    6492 &    5044 &        395324 \\ \hline
26 &   1k &  960 &    960 &  7124 &  26048 &  20340 &  12524 &    9264 &    7124 &        601428 \\ \hline
27 &   1k & 1208 &   1208 & 10188 &  37656 &  29936 &        &   13088 &   10132 &        914992 \\ \hline
28 &   1k & 1456 &   1456 & 14488 &  58792 &  46900 &        &   18932 &   14488 &       1392084 \\ \hline
29 &   1k & 1836 &   1836 & 20704 &  98872 &  72940 &        &   27080 &   20680 &       2117860 \\ \hline
30 &   1k & 2220 &   2220 & 29784 & 129988 & 112016 &        &   38936 &   29712 &       3222084 \\ \hline
\end{tabular}
\end{center}

\noindent
We end with Figure~\ref{fig:cages}, depicting the $(H,g)$-cages achieving the tight $n(H,g)$ results for $g \leq 10$.
\begin{figure}[h]
    \centering
    \includegraphics[width=\textwidth,clip,trim=0mm 75mm 0mm 75mm]{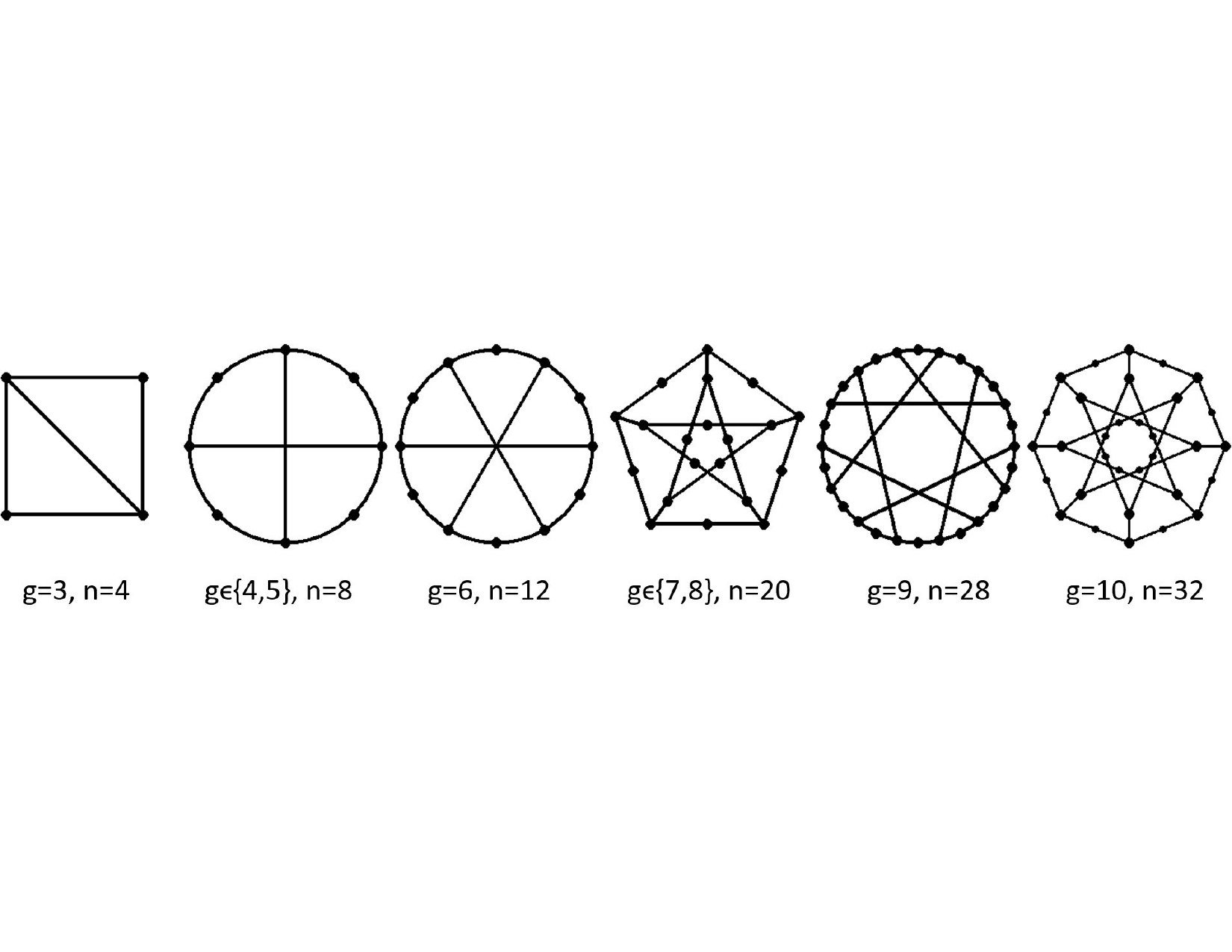}
    \caption{$H$-cages for $g=3,4,\ldots,10$.}
    \label{fig:cages}
\end{figure}

\section{Conclusion \& Open Problems}
In this work we suggest alternatives, applicable to non-regular graphs, for the well studied problem of finding $n(k,g)$, 
which is the smallest size of a $k$-regular girth $g$ graph.
Given a fixed base graph $H$, we define: 
(i) $n({\cal L}(H),g) = n(H,g)$ as the smallest size of a girth $g$ graph covering $H$,
(ii) $n({\cal U}(H),g)$ as the smallest size of a girth $g$ graph with the same universal cover as $H$,
and (iii) $n({\cal D}(H),g)$ as the smallest size of a girth $g$ graph with the same degree distribution as $H$. 
It should be noted that the second and third definitions offer a true generalization of $n(k,g)$, by taking any $k$-regular graph as $H$.

We prove that the first two definitions are equivalent up to a multiplicative constant, 
and we show that the well known Moore lower bound and Erd\"{o}s-Sachs upper bound hold in the new setting as well,
with the exponent base being $\rho(H)$ rather than $k-1$.
For the 3rd definition we have a Moore bound result by Alon, Hoory and Linial~\cite{alon2002moore}.
However, there is no Erd\"{o}s-Sachs result and the Moore bound, with exponent base $\Lambda(H)$, 
might be far from achievable as we conjecture is the case for $H_{23}$.

The question regarding the asymptotic behavior of $n(H,g)$, or $n(k,g)$ remains largely unanswered with a large gap between the lower and upper bounds.
Constructions of better families of graphs, 
such as Lubotzky, Phillips and Sarnak~\cite{lubotzky1988ramanujan}, better graphs for small values of $g$ such as in this work for $n(H_{23},g)$ and the survey of $n(3,g)$ graphs by Exoo and Jajcay~\cite{exoo2012dynamic} point towards a conjecture that the truth is much closer to the Moore bound than what is currently know.


\vskip 5pt \noindent
We end this work with a list of conjectures and open problems:
\begin{enumerate}
    
    

    
    \item {\bf Conjecture:} $n({\cal D}(H_{23}),g) \geq {\rho(H_{23})}^{(1+o(1))g/2}$

    Namely, the exponent base lower bound on the size of girth $g$ graphs where half the vertices have degree 2 and the other half have degree 3,  can be improved from $\Lambda(H_{23})$ to $\rho(H_{23})$. An even bolder conjecture would be that graphs achieving the minimum cannot have adjacent degree 2 vertices.

    \item {\bf Conjecture:} The minimal possible $\rho$ for a graph in ${\cal D}(H_{23})$ is $\rho(H_{23})$.

    One should note that by Proposition~\ref{proposition:rhoLambda} states that $\rho(G) > \Lambda(H_{23})$ for any graph $G \in {\cal D}(H_{23})$.
    However, it falls short of proving that $\rho(G) > \Lambda(H_{23}) + \epsilon$ for some explicit $\epsilon>0$.
    
    \item {\bf Problem:} Prove that any of the greedy algorithms listed in Section~\ref{section:concrete} satisfies the asymptotic Erd\"{o}s-Sachs upper bound of Theorem~\ref{theorem:main} whp.

    At least for Algorithm A, one may attempt to adapt the proof by Lineal and Simkin~\cite{linial2021randomized} to the non-regular case.

    \item {\bf Problem:} 
    Prove that the Moore lower bound for $H_{23}$, given by \eqref{eqn:exac_Moore} cannot hold as equality for large values of $g$, even when taking the divisibility constraint that $n(H_{23},g)$ must be divisible by four.

    \item {\bf Problem:} Provide a number theoretic construction of girth $g$ graphs covering some non-regular graph $H$ of size ${\rho(H)}^{(1+o(1))\cdot 3g/4}$.
\end{enumerate}

\section*{Acknowledgements}
I would like to thank my students Michael Shevelin, Tal Shamir, Almog Elbaz and Shaked Sharon for exploring greedy algorithms to construct small girth $g$ graphs covering $H_{23}$ and improving the exhaustive search for small values of $g$ as part of an undergrad project.
Also, I would like to thank Dani Kotlar and Nati Linial for many fruitful discussions.
\bibliographystyle{acm}

\begin{thebibliography}{10}

\bibitem{alon2002moore}
{\sc Alon, N., Hoory, S., and Linial, N.}
\newblock The moore bound for irregular graphs.
\newblock {\em Graphs and Combinatorics 18}, 1 (2002), 53--57.

\bibitem{angel2015non}
{\sc Angel, O., Friedman, J., and Hoory, S.}
\newblock The non-backtracking spectrum of the universal cover of a graph.
\newblock {\em Transactions of the American Mathematical Society 367}, 6
  (2015), 4287--4318.

\bibitem{biggs1993algebraic}
{\sc Biggs, N.}
\newblock {\em Algebraic graph theory}, 2nd~ed.
\newblock Cambridge university press, 1993.

\bibitem{eisner2024entropy}
{\sc Eisner, I. and Hoory, S.},
\newblock Entropy and the growth rate of universal covering trees.
\newblock {\em arXiv preprint arXiv:2410.10337} (2024).

\bibitem{erdos1963regulare}
{\sc Erdos, P., and Sachs, H.}
\newblock Regul{\"a}re graphen gegebener taillenweite mit minimaler knotenzahl.
\newblock {\em Wiss. Z. Martin-Luther-Univ. Halle-Wittenberg Math.-Natur. Reihe
  12}, 251-257 (1963), 22.

\bibitem{exoo2012dynamic}
{\sc Exoo, G., and Jajcay, R.}
\newblock Dynamic cage survey.
\newblock {\em The electronic journal of combinatorics\/} (2012), DS16--Jul.

\bibitem{friedman1993some}
{\sc Friedman, J.}
\newblock Some geometric aspects of graphs and their eigenfunctions.
\newblock {\em Duke Mathematical Journal 69}, 3 (1993), 487--525.

\bibitem{friedmanKohler2014relativized}
{\sc Friedman, J., and Kohler, D.-E.}
\newblock The relativized second eigenvalue conjecture of Alon.
\newblock {\em arXiv preprint arXiv:1403.3462\/} (2014).

\bibitem{horn2012matrix}
{\sc Horn, R. A. and Johnson, C. R.}
\newblock {\em Matrix analysis}, 2nd~ed.
\newblock Cambridge university press, 2012.

\bibitem{glover2021non}
{\sc Glover, C.}
\newblock The Non-Backtracking Spectrum of a Graph and Non-Backtracking PageRank.
\newblock Brigham Young University, 2021.

\bibitem{leighton1982finite}
{\sc Leighton, F.~T.}
\newblock Finite common coverings of graphs.
\newblock {\em Journal of Combinatorial Theory, Series B 33}, 3 (1982),
  231--238.

\bibitem{linial2021randomized}
{\sc Linial, N., and Simkin, M.}
\newblock A randomized construction of high girth regular graphs.
\newblock {\em Random Structures \& Algorithms 58}, 2 (2021), 345--369.

\bibitem{lubotzky1988ramanujan}
{\sc Lubotzky, A., Phillips, R., and Sarnak, P.}
\newblock Ramanujan graphs.
\newblock {\em Combinatorica 8}, 3 (1988), 261--277.

\end{thebibliography}

\end{document}